\theoremstyle{plain}
\newtheorem{example}{Example}
\newtheorem{lemma}{Lemma}
\newtheorem{proposition}{Proposition}
\numberwithin{equation}{section}
\numberwithin{example}{section}
\title{Remarks on anti-quasi-Sasakian manifolds}
\author{Piotr Dacko}
\begin{document}
\maketitle
\begin{abstract}
In this short note we present some remarks concerning anti-quasi-Sasakian manifolds. Some proofs of their basic properties are simplified. We also  discuss some canonical invariant distributions which exist on every anti-quasi-Sasakian manifold.
\end{abstract}
\section{Introduction}
Just recently in \cite{Dil1} was introduced a new class of almost contact metric manifolds called by the authors anti-quasi-Sasakian manifold. Let $(\phi,\xi, \eta,g)$ denote corresponding almost contact metric structure. One of remarkable property of anti-quasi-Sasakian manifold is
\begin{equation}
d\eta(\phi X,\phi Y) = - d\eta(X,Y),
\end{equation}
therefore form $d\eta$ is $\phi$-anti-invariant. From definition almost contact metric manifold is said to be anti-quasi-Sasakian, if 
\footnote{Provided definition differs from original due to 
unfortunate decision in \cite{Dil1}, to drop scalar factor 2, in coboundry formula $2d\eta(X,Y)=X\eta(Y)-Y\eta(X)-
\eta([X,Y])$}
\begin{equation}
N^{(1)} = 4d\eta\otimes\xi,\quad  d\Phi=0,
\end{equation}
where as usually $N^{(1)}$ is defined by $N^{(1)} = 
N_\phi+2d\eta\otimes \xi$, and $N_\phi$ denotes Nijenhuis torsion of $\phi$.  Therefore equivalently manifold is anti-quasi-Sasakian if  Nijenhuis torsion of $\phi$, satisfies
\begin{equation}
N_\phi=2d\eta\otimes \xi,
\end{equation}
and $d\Phi=0$.Important property of anti-quasi-Saskian manifold is that, 
in nontrivial case, $\nabla_\xi\phi \neq 0$. Note that most widely studied classes of manifolds contact metric, almost
Kenmotsu and almost cosymplectic (or almost coK\"ahler) all satisfy $\nabla_\xi\phi = 0$. So we may see anti-quasi-Sasakian manifolds as an interesting class of manifolds with $\nabla_\xi\phi \neq 0$. 

For more information about anti-quasi-Saskian manifold we refer the introductory paper \cite{Dil1}. For example 
for relations between this class of manifolds and hyperK\"ahler geometry. 

\section{Preliminaries}
Here we provide basic properties of almost contact metric
manifolds. It is assumed all manifolds are smooth, connected. Capital letters X, Y, Z,...  usually are used to denote vector fields on manifold, if not otherwise stated. Nijenhuis torsion of $\phi$, is 
\begin{equation}
N(X,Y) = \phi^2[X,Y]+[\phi X, \phi Y] -\phi [\phi X,Y] -
\phi[X,\phi Y].
\end{equation}

Let $\mathcal{M}$ be $(2n+1)$-dimensional manifold, 
$n \geqslant  1$. An almost contact metric structure on 
$\mathcal{M}$ is a quadruple of tensor fields 
$(\phi,\xi,\eta,g)$, where $\phi$ is an affinor ($(1,1)$-tensor field), $\xi$ is a vector field, $\eta$ is a 1-form, and
$g$ a Riemannian metric. Vector field $\xi$ and the form $\eta$ are called 
characteristic vector field and characteristic form. It is assumed that 

\begin{gather}
 \phi^2 X = -X +\eta(X)\xi, \quad \eta(\xi) =1 , \\
 g(\phi X, \phi Y) = g(X,Y) -\eta(X)\eta(Y). 
\end{gather}
From definition follows that tensor field $\Phi(X,Y) = g(X,\phi Y)$ is skew-symmetric, 
 a 2-form on $\mathcal{M}$, called fundamental form. 
 
 Manifold equipped with fixed almost contact metric structure is called almost contact 
 metric manifold. Almost contact metric manifold is said to be normal if $N^{(1)} =0$. 
 Among most extensively studied classes of almost contact metric manifolds are 
 contact metric manifolds, defined by $d\eta =\Phi$, almost Kenmotsu manifolds, 
 $d\eta = 0$, $d\Phi = 2 \eta\wedge\Phi$, and almost cosymplectic manifolds 
 (or almost coK\"ahler in different terminology), $d\eta=0$, $d\Phi=0$.

 Note dimension five here is exceptional as there are some exceptional five-dimensional 
 manifolds. For example manifold where characteristic form is a contact form and 
 $d\Phi = 2 \eta\wedge\Phi$, formally condition is the same as in case of almost Kenmotsu
 manifold. For dimensions $\geqslant 6$ we have that $d\Phi = 2 \eta\wedge \Phi$ always
 follows $d\eta=0$. 
 
 Almost contact metric manifold $\mathcal M$ is called anti-quasi-Sasakian if 
 \begin{equation}
   N^{(1)} = 4d\eta\otimes \xi,\quad d\Phi =0,
 \end{equation}
 ie. the fundamental form of $\mathcal{M}$ is 
 closed \cite{Dil1}.

 For general literature on the subject we refer to \cite{Bl1},  
 \cite{CPM1},\cite{CG}, \cite{Dil2}, \cite{GY}, \cite{O1}, \cite{Sal}.
 
 \section{Basic properties of anti-quasi-Sasakian manifolds}
 In this section we reprove some basic statements concerning anti-quasi-Sasakian 
 manifolds. For original proofs we refer to introductory paper \cite{Dil1}.
 
 Let us make short digression into complex geometry. If $J$ is almost 
 complex structure on manifold, its Nijenhuis torsion has simple symmetry 
 with resp. to $J$. It is very easy to verify following formula 
 \begin{equation}
 N_J(JX,JY) = -N_J(X,Y),
 \end{equation}
 in lemma below we provide similar formula in case of almost contact metric structure.
 
\begin{lemma}
Let $\mathcal{M}$ be an almost contact metric manifold. Then Nijenhuis torsion of a 
structure tensor $\phi$, satisfies
\begin{align}
& N_\phi(\phi X, \phi Y) = -N_\phi(X,Y) - 2d\eta(X,Y)\xi-2d\eta(\phi X,\phi Y)\xi+ \\ 
& \qquad       2(\eta\wedge\mathcal{L}_\xi \eta)(X,Y))\xi - 
\eta(X)\phi (\mathcal{L}_\xi\phi)Y +\eta(Y)\phi(\mathcal{L}_\xi\phi)X, 
\nonumber
\end{align}
\end{lemma}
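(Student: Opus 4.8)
The plan is to prove the identity by brute expansion of the definition of $N_\phi$, noting at the outset that nothing beyond the defining relations of an almost contact metric structure is used: the metric $g$, the form $d\Phi$, and every integrability hypothesis are irrelevant, so the result is a statement about the bare algebra of $\phi,\xi,\eta$. First I would substitute $\phi X$ and $\phi Y$ into
\[
N_\phi(U,V)=\phi^2[U,V]+[\phi U,\phi V]-\phi[\phi U,V]-\phi[U,\phi V],
\]
which produces the four terms $\phi^2[\phi X,\phi Y]$, $[\phi^2X,\phi^2Y]$, $-\phi[\phi^2X,\phi Y]$, $-\phi[\phi X,\phi^2Y]$. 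Then I would push the relation $\phi^2Z=-Z+\eta(Z)\xi$, together with its standard consequences $\phi\xi=0$ and $\eta\circ\phi=0$, into the last three of these and expand every Lie bracket by Leibniz, $[fU,V]=f[U,V]-(Vf)U$. After this step $N_\phi(\phi X,\phi Y)$ is a sum of genuine bracket terms in $X,Y,\xi$ together with a collection of scalar functions multiplying $\xi$.

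The second step is to isolate the $-N_\phi(X,Y)$ skeleton. The term $\phi^2[\phi X,\phi Y]$ gives $-[\phi X,\phi Y]+\eta([\phi X,\phi Y])\xi$; the term $[\phi^2X,\phi^2Y]$ gives $[X,Y]$ plus lower order; and the two $-\phi[\phi^2\cdot,\cdot]$ terms give $\phi[\phi X,Y]+\phi[X,\phi Y]$ plus the pieces $-\eta(X)\phi[\xi,\phi Y]$ and $\eta(Y)\phi[\xi,\phi X]$ (here $\phi\xi=0$ annihilates the would-be Leibniz corrections). Writing $-N_\phi(X,Y)$ after the same substitution as $[X,Y]-\eta([X,Y])\xi-[\phi X,\phi Y]+\phi[\phi X,Y]+\phi[X,\phi Y]$ and subtracting, one is left only with the correction terms. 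To convert the surviving $\eta(X)\phi[\xi,\phi Y]$-type pieces into the claimed $\phi(\mathcal{L}_\xi\phi)$ terms I would use
\[
\phi(\mathcal{L}_\xi\phi)Y=\phi[\xi,\phi Y]-\phi^2[\xi,Y]=\phi[\xi,\phi Y]+[\xi,Y]-\eta([\xi,Y])\xi,
\]
together with the mirror formula with $X$ and $Y$ interchanged, which also spills out two more $(\cdots)\xi$ terms.

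Finally I would collapse the accumulated $(\cdots)\xi$ terms into $-2\,d\eta(X,Y)\xi-2\,d\eta(\phi X,\phi Y)\xi+2(\eta\wedge\mathcal{L}_\xi\eta)(X,Y)\xi$. The ingredient is $2d\eta(U,V)=U\eta(V)-V\eta(U)-\eta([U,V])$, used in the two guises $\eta([\phi X,\phi Y])=-2\,d\eta(\phi X,\phi Y)$ (immediate from $\eta\circ\phi=0$) and $\eta([X,Y])=X\eta(Y)-Y\eta(X)-2\,d\eta(X,Y)$; the leftover coefficients involving $\eta(X)\xi(\eta(Y))$, $\eta(X)\eta([\xi,Y])$ and their $X\leftrightarrow Y$ mirrors then reorganise, via $(\mathcal{L}_\xi\eta)(Y)=\xi(\eta(Y))-\eta([\xi,Y])$, into $\eta(X)(\mathcal{L}_\xi\eta)(Y)-\eta(Y)(\mathcal{L}_\xi\eta)(X)=2(\eta\wedge\mathcal{L}_\xi\eta)(X,Y)$, the factor $2$ being forced by the wedge normalisation that matches the coboundary convention fixed in the introduction. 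I expect the main obstacle to be purely organisational rather than conceptual: there is no single hard idea, but one must track the numerous $\xi$-valued coefficients without error and stay consistent with the two sign/scale conventions (for $d\eta$ and for $\wedge$), since a slip in either produces a spurious factor of $2$ in the final formula; the extra terms beyond $-N_\phi(X,Y)$ are exactly the obstruction to the clean symmetry $N_J(JX,JY)=-N_J(X,Y)$ of the complex case recalled above.
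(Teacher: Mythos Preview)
Your proposal is correct and is precisely the approach the paper intends: the paper's own proof is the single sentence ``Proof is very direct from definition of Nijenhuis torsion,'' and your brute expansion via $\phi^2Z=-Z+\eta(Z)\xi$, $\phi\xi=0$, $\eta\circ\phi=0$ together with the Leibniz rule is exactly that direct computation carried out in detail.
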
 
\begin{proof}
Proof is very direct from definition of Nijenhuis torsion.
\end{proof}
 We emphasize that results above holds true for every almost contact metric manifold.

From above Lemma we obtain almost immediately the following statement.
\begin{proposition}
On anti-quasi-Sasakian manifold following properties are satisfied
\begin{equation}
\mathcal{L}_\xi \phi =0, \quad d\eta(\phi X,\phi Y) = -d\eta(X,Y).
\end{equation}
\end{proposition}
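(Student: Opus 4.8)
The plan is to feed the anti-quasi-Sasakian condition $N_\phi = 2d\eta\otimes\xi$ into the displayed identity of the Lemma and then split the resulting equation into its component along $\xi$ and its component in $\ker\eta$. Substituting $N_\phi(X,Y) = 2d\eta(X,Y)\xi$ and $N_\phi(\phi X,\phi Y) = 2d\eta(\phi X,\phi Y)\xi$, the left-hand side becomes $2d\eta(\phi X,\phi Y)\xi$, while the right-hand side is a $\xi$-multiple plus the two terms $-\eta(X)\phi(\mathcal{L}_\xi\phi)Y + \eta(Y)\phi(\mathcal{L}_\xi\phi)X$. Since $\eta\circ\phi = 0$ and $\phi\xi = 0$ (standard consequences of the structure axioms), these last two terms lie in $\ker\eta$, so they must vanish identically and the $\xi$-components must agree.

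From the $\ker\eta$ part $\eta(X)\,\phi(\mathcal{L}_\xi\phi)Y = \eta(Y)\,\phi(\mathcal{L}_\xi\phi)X$, setting $Y=\xi$ and using $(\mathcal{L}_\xi\phi)\xi = 0$ gives $\phi\circ(\mathcal{L}_\xi\phi) = 0$; applying $\phi$ once more and using $\phi^2 = -I + \eta\otimes\xi$ yields $(\mathcal{L}_\xi\phi)X = \eta\big((\mathcal{L}_\xi\phi)X\big)\xi$, so $\mathcal{L}_\xi\phi$ is reduced to its $\xi$-component. A short Lie-derivative computation, using $\eta\circ\phi = 0$ and the Cartan formula $\mathcal{L}_\xi\eta = \iota_\xi d\eta$, identifies it: $\eta\big((\mathcal{L}_\xi\phi)X\big) = -(\mathcal{L}_\xi\eta)(\phi X) = -d\eta(\xi,\phi X)$.

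Comparing the $\xi$-components gives the scalar identity $4\,d\eta(\phi X,\phi Y) + 4\,d\eta(X,Y) = 2(\eta\wedge\mathcal{L}_\xi\eta)(X,Y)$. I would then specialize to $X=\xi$: since $\phi\xi = 0$ and $(\eta\wedge\mathcal{L}_\xi\eta)(\xi,Y) = (\mathcal{L}_\xi\eta)(Y)$, the identity collapses to $4\,d\eta(\xi,Y) = 2\,d\eta(\xi,Y)$, forcing $\iota_\xi d\eta = 0$, i.e. $\mathcal{L}_\xi\eta = 0$. Feeding this back, the term $\eta\wedge\mathcal{L}_\xi\eta$ disappears and we obtain $d\eta(\phi X,\phi Y) = -d\eta(X,Y)$, while the $\xi$-component of $\mathcal{L}_\xi\phi$ computed above becomes $-d\eta(\xi,\phi X) = 0$, hence $\mathcal{L}_\xi\phi = 0$.

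The only delicate point is the apparent circularity: $\mathcal{L}_\xi\phi = 0$ seems to need $\mathcal{L}_\xi\eta = 0$, which in turn seems to need the $\phi$-anti-invariance of $d\eta$. This is resolved precisely by the substitution $X=\xi$: evaluating the $\xi$-component identity at $\xi$ kills the $d\eta(\phi\,\cdot,\phi\,\cdot)$ term through $\phi\xi = 0$ and leaves a bare relation on $d\eta(\xi,\cdot)$. Everything else is routine bookkeeping with $\phi^2 = -I + \eta\otimes\xi$, $\eta\circ\phi = 0$, $\phi\xi = 0$ and $\mathcal{L}_\xi\eta = \iota_\xi d\eta$.
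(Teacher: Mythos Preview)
Your proof is correct and follows essentially the same route as the paper: substitute the anti-quasi-Sasakian condition $N_\phi = 2d\eta\otimes\xi$ into the Lemma, then split into the $\xi$-component (yielding the scalar identity) and the $\ker\eta$-component (yielding $\phi(\mathcal{L}_\xi\phi)=0$). You are simply more explicit than the paper about the step the author records as ``observe that the first equation follows $\mathcal{L}_\xi\eta=0$'': you spell out the substitution $X=\xi$ that kills the $d\eta(\phi\,\cdot,\phi\,\cdot)$ term and forces $\iota_\xi d\eta=0$, and you unwind in detail why $\phi(\mathcal{L}_\xi\phi)=0$ together with $\mathcal{L}_\xi\eta=0$ gives $\mathcal{L}_\xi\phi=0$.
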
 
\begin{proof}
Indeed. By definition for anti-quasi-Saskian manifold we have for Nijenhuis torsion $N_\phi(X,Y)=2d\eta(X,Y)\xi$. Applying now 
above Lemma we obtain two independent equations
\begin{gather}
4d\eta(X,Y)+4d\eta(\phi X,\phi Y) -2 \eta\wedge \mathcal{L}_\xi\eta(X,Y) = 0, \\
\eta(X)\phi (\mathcal{L}_\xi\phi)Y - \eta(Y)\phi(\mathcal{L}_\xi\phi)X = 0,
\end{gather}
observe that the first equation follows $\mathcal L_\xi\eta=0$, 
and therefore $d\eta(\phi X,\phi Y)=-d\eta(X,Y)$. From the second one we find,  
$\phi(\mathcal{L}_\xi\phi)=0$, which in virtue of $\mathcal{L}_\xi\eta=0$, 
implies $\mathcal{L}_\xi\phi =0$.  
\end{proof}

Let define $A = -\phi \nabla\xi$. For covariant derivative $\nabla\phi$, we have 
\begin{equation}
(\nabla_X\phi)Y = 2\eta(X)Y+\eta(Y)AX+g(X,AY)\xi.
\end{equation}
It is not difficult to prove above formula. We use some general formula for covariant 
derivative which is true for every almost contact metric manifolds, 
cf. \cite{Bl1}. For details see 
\cite{Dil1}.

Note that characteristic vector field is Killing vector field, $\mathcal{L}_\xi g=0$, 
moreover $d\eta(X,Y) = g(X,S_\xi Y)$, where $S_\xi = -\nabla\xi$. We also have 
$A\phi +\phi A =0$, \cite{Dil1}.

Before proceeding further we need some glimpse what is supposed to be canonical in 
some sense anti-quasi-Sasakian structure in terms of local description.

\begin{example}
Let $\mathcal M = \mathbb{R}^{2n+1}$, $n\geqslant 2$. Let 
$(t,x^i,y^i)$ be global coordinates of a point $p \in \mathcal{M}$. Let
$a(v,w)$ be a linear skew-symmetric form on linear vector space $\mathbb{V}^n$,
 $a(v,w)= -a(w,v)$, with coefficients $a_{ij}$, $i,j=1,\ldots n$.
Set $\eta = dt +  a_{ij}(y^jdx^i-y^idx^j)$, we assume here sum on repeating 
indices. Note if rank of $a$ is 
maximal $\eta$ is 
a contact form on $\mathcal M$. As characteristic vector field we take 
$\xi = \partial_t$. Let
\begin{equation}
X_k = \partial_{y^k},\quad X_{k+n} =-2a_{kj}y^j\partial_t+
      \partial_{x^k}, 
\end{equation}  
tensor field $\phi$ is now defined by conditions
\begin{equation}
\phi \xi = 0, \quad \phi X_i = - X_{k+n}, \quad \phi X_{i+n} = X_i, 
\quad k = 1,\ldots n, 
\end{equation} 
metric is defined by requirements that global frame $(\xi, X_i, X_{i+n})$, 
$i=1,\ldots n$ is orthonormal. Therefore we have almost contact metric structure 
$(\phi, \xi,\eta,g)$ on $\mathcal{M}$. Dual coframe is given by 
$(\eta,  dx^i, dy^i)$, $i=1,\ldots n$,
therefore metric  explicitly  is described by 
\begin{equation}
g = \eta\otimes\eta +\sum_{i=1}^n (dx^i\otimes dx^i + 
    dy^i\otimes dy^i),
\end{equation}  
and for fundamental form we find $\Phi = 2 dx^1\wedge dy^1 +\ldots dx^n\wedge dy^n$, 
hence $d\Phi =0$. Note that commutators $[X,Y]= f\xi$ for some function $f$, for 
every pair $Y$, $Z \in \{\xi, X_i, X_{i+n}, i=1,\ldots n\}$, therefore for Nijenhuis 
torsion we have $N_\phi(X,Y) = \eta([\phi X, \phi Y])\xi =- 2d\eta(\phi X,\phi Y)$. 
Finally from defintion of $\eta$ we find $d\eta(\phi X, \phi Y) = -d\eta(X,Y)$. So we 
conclude that $\mathcal{M}$ equipped with this structure became anti-quasi-Sasakian 
manifold.  
\end{example}
Now we make some rather evident remarks. First the frame in the example creates 
Lie algebra. Hence there is unique Lie group structure $\mathbb H^{2n+1}$ on $\mathcal{M}$, such that 
the vector fields and the itself become left-invariant. The group $\mathbb{H}^{2n+1}$, 
is 2-step nilpotent Lie group. By Malcev theorem if coefficients of skew form $a$ are 
rational, $\mathbb{H}^{2n+1}$, admits cocompact lattice $\Gamma$. So quotient 
$\mathbb{H}^{2n+1}/\Gamma$ is compact nilmanifold. Moreover it can be equipped with 
corresponding almost contact metric structure in a way it became compact anti-quasi-Saskian 
manifold. So example above also serve are as source of compact examples of anti-quasi-Saskian 
manifolds.  

In our example we found $N_\phi = -2d\eta(\phi X,\phi Y)\xi$ which suggest to consider 
more general class of manifolds than anti-quasi-Sasakian. Namely we require 
\begin{equation}
N^{(1)} = 2(d\eta(X,Y) -  d\eta(\phi X,\phi Y))\xi,\quad d\Phi=0,
\end{equation}
now depending on symmetries of $d\eta$ we obtain quasi-Sasakian or anti-quasi-Sasakian 
manifolds. So such class of manifolds contains both quasi-Sasakian and anti-quasi-Sasakian 
manifolds.

\section{Some curvature properties}
In this we are interested in Jacobi operator $J_\xi: X\mapsto R_{X\xi}\xi$, 
of characteristic vector field, where $R_{XY}Z=[\nabla_X,\nabla_Y]Z -
\nabla_{[X,Y]}Z$ is curvature operator of Levi-Civita connection.

Note we have identity 
\begin{equation}
R_{XY}\xi = - (\nabla_X S_\xi)Y + (\nabla_Y S_\xi)X,
\end{equation} 
 identity 
\begin{equation}
3d\omega(X,Y,Z) = (\nabla_X\omega)(Y,Z) + (\nabla_Y\omega)(Z,X) + 
                (\nabla_Z\omega)(X,Y),
\end{equation}
true for every 2-form and every free-torsion connection, applied to $d\eta$, gives
\begin{equation}
g(R_{XY}\xi, Z) = -g(X,(\nabla_Z S_\xi)Y),
\end{equation}
setting $Y=\xi$, we obtain
\begin{equation}
g(R_{X\xi}\xi,Z)=g(J_\xi X,Z) = -g(X, S^2_\xi Y), 
\end{equation}
therefore symmetric form $g(J_\xi X, Z) \geqslant 0$ is semi-definite. If $S_\xi$ has 
maximal rank, equivalently $\eta$ is contact form, Jacobi operator $J_\xi$ has maximal 
rank. In such a case manifold is irreducible as Riemannian manifold, ie. it cannot 
be decomposed to Riemann product, even locally. So we have 
\begin{proposition}
If characteristic form of anti-quasi-Sasakian manifold $\mathcal{M}$, is  
a contact form, $\mathcal{M}$ is irreducible as Riemannian manifold. 
\end{proposition}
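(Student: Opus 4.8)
The plan is to turn the curvature computation recorded just above into a statement about the eigenstructure of the Jacobi operator and then play it off against the block structure of the curvature of a Riemannian product. From $g(J_\xi X,Z)=-g(X,S_\xi^2 Z)$ and non-degeneracy of $g$ we get $J_\xi=-S_\xi^2$, a symmetric, positive semi-definite endomorphism of each tangent space with $\ker J_\xi=\ker S_\xi^2=\ker S_\xi$ (here $S_\xi$ is skew-symmetric because $\xi$ is Killing, so $S_\xi^2X=0$ forces $|S_\xi X|^2=0$). Since $d\eta(X,Y)=g(X,S_\xi Y)$ and $d\eta(X,\xi)=0$ (put $Y=\xi$ in the previous Proposition, using $\phi\xi=0$), the contact hypothesis says exactly that $S_\xi$ is invertible on $\ker\eta$; hence $\ker J_\xi=\mathbb{R}\xi$ pointwise, i.e.\ $J_\xi$ has constant rank $2n$, as already noted. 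This is the only place the contact assumption is used.

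Next I would argue by contradiction. If $\mathcal{M}$ were reducible, some point would have a connected neighbourhood $U$ isometric to a Riemannian product, i.e.\ $TU=\mathcal{D}_1\oplus\mathcal{D}_2$ with $\mathcal{D}_1,\mathcal{D}_2$ orthogonal, parallel, and of positive rank. I will use the classical fact that the curvature of a metric product is block-diagonal: decomposing $X=X_1+X_2$ and $\xi=\xi_1+\xi_2$ along the splitting one has $R_{X_1\xi_1}\xi_1\in\mathcal{D}_1$, $R_{X_2\xi_2}\xi_2\in\mathcal{D}_2$, and all ``mixed'' curvature terms vanish; in particular $R_{X\xi}=0$ as soon as $X$ and $\xi$ lie in different factors, and $J_\xi(\mathcal{D}_i)\subseteq\mathcal{D}_i$. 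Thus $J_\xi$ is itself block-diagonal, so its kernel splits as a direct sum over the two factors, and being one-dimensional it sits inside a single factor at each point of $U$. The loci $\{\xi\in\mathcal{D}_1\}$ and $\{\xi\in\mathcal{D}_2\}$ are closed, disjoint (as $\xi$ is nowhere zero) and cover the connected set $U$, so one of them is all of $U$; say $\xi\in\mathcal{D}_1$ throughout $U$.

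To finish, pick a point of $U$ and a non-zero vector $X\in\mathcal{D}_2$ there, which exists since $\mathrm{rank}\,\mathcal{D}_2\geqslant 1$. Because $X\in\mathcal{D}_2$ and $\xi\in\mathcal{D}_1$ lie in different factors we have $R_{X\xi}=0$, hence $J_\xi X=R_{X\xi}\xi=0$, i.e.\ $X\in\ker J_\xi=\mathbb{R}\xi\subseteq\mathcal{D}_1$; but $X\in\mathcal{D}_2$ and $\mathcal{D}_1\cap\mathcal{D}_2=0$, so $X=0$, a contradiction. Therefore $\mathcal{M}$ admits no such local product decomposition and is irreducible. The computations in this argument are routine; the real content — and the only step needing care — is the appeal to the local de Rham splitting together with the block-diagonal form of the curvature of a metric product, plus the bookkeeping that pins $\xi$ down to one de Rham factor. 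Note also that the argument is completely dimension-free, so it covers the exceptional low-dimensional cases as well.
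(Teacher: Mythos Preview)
Your argument is correct and follows the same approach as the paper: both compute $J_\xi=-S_\xi^2$ from the identities preceding the Proposition and observe that the contact hypothesis forces $\ker J_\xi=\mathbb{R}\xi$. The paper then simply asserts that maximal rank of $J_\xi$ implies Riemannian irreducibility, whereas you spell out the omitted step in detail (block-diagonality of curvature under a de Rham splitting, the connectedness argument pinning $\xi$ to one factor, and the resulting contradiction); your version is thus a faithful elaboration of the paper's sketch rather than a different route.
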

For $Z=X$, and $X$ unit and orthogonal to $\xi$, $g(R_{X\xi}\xi,X)$ is $\xi$-sectional 
curvature in direction $X$. If $\xi$ sectional curvature is constant $=c$, then 
it must be $c > 0$, and $S_\xi^2 = -c(Id-\eta\otimes\xi)$. 

Finally we will provide canonical splitting of anti-quasi-Saskian manifold. 
We will show that there are naturally defined integrable distributions. 
Set $\mathcal{H}_1$ as a distribution, such that  its sections are vector fields, 
which satisfy $\nabla_X\phi =0$. Assume rank of $\mathcal H_1$ is 
constant. We define $\mathcal{H}_2$, as 
orthogonal compliment of $\{\xi\}\oplus\mathcal{H}_1$. Thus we have 
orthogonal splitting 
\begin{equation}
T\mathcal{M}=\{\xi\}\oplus\mathcal{H}_1\oplus\mathcal{H}_2.
\end{equation}
\begin{proposition}
We have
\begin{itemize}
\item[(1)] Distribution $\mathcal{H}_1$ is involutive, the leaves carry natural
 K\"ahler structure;
\item[(2)] Distribution $\{\xi\}\oplus\mathcal{H}_1$ is involutive, leaves carry 
natural cosymplectic structure.
\end{itemize}
\end{proposition}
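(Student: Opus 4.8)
The plan is to replace the condition defining $\mathcal{H}_1$ by a condition involving only the operator $S_\xi=-\nabla\xi$ and the form $d\eta$, then to deduce integrability from the fact that $d\eta$ is closed, and finally to read off the induced structures on the leaves from the observation that the Nijenhuis torsion, $d\eta$ and $d\Phi$ all restrict well. Since $\xi$ is Killing, $S_\xi$ is skew-symmetric, so $d\eta(X,Y)=g(X,S_\xi Y)=-g(S_\xi X,Y)$ identifies the characteristic distribution $\mathcal{D}:=\{X:\iota_X d\eta=0\}$ with $\ker S_\xi$; note $\xi\in\mathcal{D}$ because $\iota_\xi d\eta=\mathcal{L}_\xi\eta=0$ on an anti-quasi-Sasakian manifold, and that $S_\xi\xi=0$. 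Writing $A=\phi S_\xi=-\phi\nabla\xi$, one has $(\nabla_X\phi)\xi=-\phi\nabla_X\xi=AX$, so $\nabla_X\phi=0$ already forces $AX=0$, equivalently $S_\xi X=0$ (since $S_\xi X\perp\xi$ and $\ker\phi=\mathbb{R}\xi$); a short further computation with the formula for $\nabla\phi$ and the relations $A\xi=0$, $A\phi+\phi A=0$ shows that, conversely, $S_\xi X=0$ together with $\eta(X)=0$ gives $\nabla_X\phi=0$. Hence $\mathcal{H}_1=\ker S_\xi\cap\ker\eta$ and $\{\xi\}\oplus\mathcal{H}_1=\ker S_\xi=\mathcal{D}$, which by the standing constant-rank hypothesis are smooth subbundles. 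Both are $\phi$-invariant ($\ker\eta$ because $g(\phi X,\xi)=0$, and $\ker S_\xi=\ker A$ because $A\phi=-\phi A$), and since $\eta$ vanishes on $\mathcal{H}_1$ one has $\phi^2=-\mathrm{id}$ there; thus $(\phi,g)$ restricts to an almost Hermitian structure on each leaf of $\mathcal{H}_1$ and $(\phi,\xi,\eta,g)$ to an almost contact metric structure on each leaf of $\mathcal{D}$.

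For integrability I would invoke the standard Cartan-calculus fact that the characteristic distribution of a closed form is involutive: $d\eta$ is exact, hence closed, so for sections $X,Y$ of $\mathcal{D}$
\[
\iota_{[X,Y]}d\eta=\mathcal{L}_X(\iota_Y d\eta)-\iota_Y(\mathcal{L}_X d\eta)=-\iota_Y\bigl(d(\iota_X d\eta)+\iota_X\,dd\eta\bigr)=0,
\]
so $[X,Y]$ is again a section of $\mathcal{D}=\{\xi\}\oplus\mathcal{H}_1$, proving (2)'s involutivity. For $\mathcal{H}_1$ it then suffices to add that if $X,Y$ are sections of $\mathcal{H}_1$ then $\eta([X,Y])=X\eta(Y)-Y\eta(X)-2d\eta(X,Y)=-2g(X,S_\xi Y)=0$, so $[X,Y]$ lies in $\ker S_\xi\cap\ker\eta=\mathcal{H}_1$. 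Hence both distributions are involutive and define foliations.

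Now let $L$ be a leaf of $\mathcal{H}_1$ and $L'$ a leaf of $\{\xi\}\oplus\mathcal{H}_1$. Because each distribution is involutive and $\phi$-invariant, on a leaf the ambient Lie bracket of tangent fields coincides with the intrinsic one and $\phi^2$ restricts correctly, so the Nijenhuis torsion of the restricted affinor equals the restriction of $N_\phi$. On an anti-quasi-Sasakian manifold $N_\phi=2\,d\eta\otimes\xi$, and $d\eta$ vanishes on $\mathcal{D}\supseteq\mathcal{H}_1$; therefore $N_{\phi|_L}=0$ and $N^{(1)}_{L'}=0$, i.e.\ the restricted affinor is integrable in both cases. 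Moreover $d\Phi=0$ gives $d(\Phi|_L)=0$ and $d(\Phi|_{L'})=0$, while $d\eta|_{\mathcal{D}}=0$ gives $d(\eta|_{L'})=0$, and $\eta|_L=0$. Consequently $(L,\phi|_L,g|_L)$ is a complex manifold carrying a closed compatible fundamental form, i.e.\ a K\"ahler manifold, and $(L',\phi|_{L'},\xi,\eta|_{L'},g|_{L'})$ is a normal almost contact metric manifold with closed characteristic form and closed fundamental form, i.e.\ a cosymplectic (coK\"ahler) manifold. This yields (1) and (2).

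The one genuinely structure-specific step — and the place I expect the real work to sit — is the identification in the first paragraph: correctly converting $\nabla_X\phi=0$ into $S_\xi X=0,\ \eta(X)=0$ by means of the formula for $\nabla\phi$ (using $A\xi=0$, which itself rests on $\mathcal{L}_\xi\eta=0$). After that, involutivity is just the kernel-of-a-closed-form argument, and the K\"ahler / cosymplectic nature of the leaves is forced purely by the vanishing of the restrictions of $N^{(1)}$, $d\eta$ and $d\Phi$, together with the elementary facts that restriction to a leaf commutes with $d$ and, on an invariant involutive distribution, with the Nijenhuis construction; no curvature computations are needed.
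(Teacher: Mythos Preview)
Your argument is essentially correct, but there is one small logical gap in the identification step. From $\nabla_X\phi=0$ you deduce $S_\xi X=0$, and from $S_\xi X=0,\ \eta(X)=0$ you deduce $\nabla_X\phi=0$; but to conclude $\mathcal H_1=\ker S_\xi\cap\ker\eta$ you also need $\nabla_X\phi=0\Rightarrow\eta(X)=0$. This follows from the formula $(\nabla_X\phi)Y=2\eta(X)AY+\eta(Y)AX+g(X,AY)\xi$: once $AX=0$, the component in $\ker\eta$ is $2\eta(X)AY$, which must vanish for all $Y$, hence $\eta(X)=0$ provided $A\neq 0$ at the point. This nontriviality is implicit in the paper's setup (otherwise $\nabla_\xi\phi=0$ and the direct sum $\{\xi\}\oplus\mathcal H_1$ degenerates), so you should state it. With that line added, your identification $\{\xi\}\oplus\mathcal H_1=\ker S_\xi=\{X:\iota_X d\eta=0\}$ is fully justified and the rest of your argument goes through.

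Your route is genuinely different from the paper's. The paper never reformulates $\mathcal H_1$ as the characteristic distribution of $d\eta$; instead it stays with the defining condition and verifies involutivity by computing $(\nabla_{[X,Y]}\phi)Z$ term by term, killing $\eta([X,Y])$ via $S_\xi X=0$ and killing $g([X,Y],AZ)=-d\eta([X,Y],\phi Z)$ by expanding the coboundary identity $3\,d^2\eta(X,Y,\phi Z)=0$; the same computation is then repeated for $[\xi,X]$. Your approach packages the same fact ($d(d\eta)=0$) into the one-line Cartan identity $\iota_{[X,Y]}d\eta=\mathcal L_X\iota_Yd\eta-\iota_Y\mathcal L_Xd\eta=0$, which is cleaner and immediately gives both distributions at once. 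The trade-off is that the paper's computation shows slightly more along the way (e.g.\ $[\xi,\mathcal H_1]\subset\mathcal H_1$, so $\mathcal H_1$ is an ideal), whereas your argument yields only $[\xi,\mathcal H_1]\subset\{\xi\}\oplus\mathcal H_1$; of course the stronger statement follows from yours plus $\mathcal L_\xi\eta=0$. For the induced structures on the leaves the two arguments are essentially the same.
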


\begin{proof}
We start from $\mathcal{H}_1$. The first note $\mathcal{H}_1$ is $\phi$-invariant. 
Indeed $(\nabla_{\phi X})\phi Y = g(\phi X, AY)\xi = g(\phi X,\phi S_\xi Y)\xi = 
g(X, S_\xi Y)\xi$. However by assumption $0=(\nabla_X\phi)Y = g(X,A Y)\xi = 
g(S_\xi X,\phi Y)\xi$, and as $Y$ is arbitrary $S_\xi X = \eta(S_\xi X)\xi$, however 
the last is zero as $\eta(S_\xi X) = g(\nabla_\xi\xi,X)$ and $\nabla_\xi\xi =0$. 
Therefore $S_\xi X =0$. 

To prove involutivnes we need to show that $(\nabla_{[X,Y]}\phi)Z=0$, whenever 
 $\nabla_X\phi = \nabla_Y\phi =0$. We have 
\begin{equation}
(\nabla_{[X,Y]}\phi)Z = 2\eta([X,Y])AZ +\eta(Z)A[X,Y] +g([X,Y],AZ)\xi,
\end{equation}
note $\eta([X,Y])= -2d\eta(X,Y) = -2g(X,S_\xi Y)=0$. The term $g([X,Y],AZ)=
-g([X,Y], S_\xi \phi Z)= -d\eta([X,Y], \phi Z)$. Now coboundry formula for exterior
derivative 
\begin{align}
& 0= 3d^2\eta(X,Y,\phi Z) = Xd\eta(Y,\phi Z)+Yd\eta(\phi Z,X)+(\phi Z)d\eta(X,Y) - \\
& \qquad d\eta([X,Y],\phi Z)-d\eta([Y,\phi Z],X) - d\eta([\phi Z, Y],X), \nonumber
\end{align}
yields $0=d\eta([X,Y],\phi Z)$ as over terms on the right hand  vanish. Therefore 
$g([X,Y], AZ)=0$, and due to symmetries of $A$, and as $Z$ is arbitrary, we have in 
fact $A[X,Y]=0$. Therefore $\nabla_{[X,Y]}\phi =0$. During the proof we have shown 
that in fact $d\eta |_{\mathcal{H}_1}=0$. Now let $\mathcal{F}$ be arbitrary leaf
of $\mathcal{H}_1$, maximal connected integral submanifold. Almost complex structure 
$J$ on $\mathcal{F}$ is defined by restriction $\phi|_{\mathcal{H}_1}$, metric by pullback, 
ie. $\mathcal{F}$ became Riemannian submanifold. Nijenhuis torsion of $J$ vanishes, 
as $d\eta$ is zero on $\mathcal{F}$, hence structure is complex. Finally fundamental 
form of Hermitian structure $(J,\tilde g)$ is closed as it is just pullback of the 
the fundamental form of $\mathcal M$. This ends the proof of part (1) in our Proposition.

To prove that $\{\xi\}\oplus\mathcal{H}_1$ is involutive we in fact show little more, 
namely commutator $[\xi, X]$ is section of $\mathcal{H}_1$, if $X$ is a section 
of $\mathcal H_1$. Therefore 
\begin{equation}
(\nabla_{[\xi,X]}\phi)Y = 2\eta([\xi, X])AY + \eta(Y)A[\xi,X]+g([\xi,X], AY)\xi,
\end{equation}
note $\eta([\xi,X]]) = -2 d\eta(\xi, X)=0$. Now we proceed in the same way as above 
to show that $A[\xi,X]=0$, hence $\nabla_{[\xi, X]}\phi=0$. Now as we already 
know that $\mathcal{H}_1$ is involutive, we see 
$[\xi, \mathcal{H}_1] \subset \mathcal{H}_1$, so $\{\xi\}\oplus\mathcal{H}_1$ is involutive,
and $\mathcal{H}_1$ is ideal. As $\{\xi\}\oplus \mathcal{H}_1$ is $\phi$-invariant 
and odd-dimensional every leaf $\mathcal{F}$ carries natural almost contact 
metric structure $(\tilde\phi,\xi, \tilde{\eta},\tilde g)$, where $\tilde{\eta}$, 
$\tilde{g}$ are just pullbacks of $\eta$, $g$. Forms $\tilde{\eta}$ and fundamental form
$\tilde{\Phi}$, of this induced structure are both closed, hence structure is almost cosymplectic. It is easy to see that in fact structure is cosymplectic, for example we
find that Nijenhuis torsion of $\tilde{\phi}$ vanishes. This ends the proof of part (2).
\end{proof}

Here we make some remarks considering dimension of anti-quasi-Sasakian manifold. 
Going back to the example from previous section we easy see that it is not possible to 
create example with $\eta$ a contact form in dimension seven. This comes from 
fact that a skew form on 3-dimensional vector space always has non-trivial kernel. 
But there is general fact that skew-form on odd-dimensional vector space has
non-trivial kernel. Thus in terms of our Example, if $n=2k+1$, ie. dimension 
of manifold is $4k+3$, there is no such structure with $\eta$ a contact form. 
Therefore above Proposition applies, $\mathcal{H}_1$ is nontrivial and 
$\dim \mathcal{H}_1 \geqslant 2$. Of course what suggest our Example is a general 
result which is true for every anti-quasi-Sasakian manifold. 
For detailed discussion we refer to \cite{Dil1}.

\vspace{1cm}

{\tiny author email: \href{mailto:piotrdacko@yahoo.com}{piotrdacko@yahoo.com}}

\end{document}